\newtheorem{algorithm}{Algorithm}
\begin{document}
\newcommand{\xfill}[2][1ex]{{%
  \dimen0=#2\advance\dimen0 by #1
  \leaders\hrule height \dimen0 depth -#1\hfill%
}}
\title{Parallel extragradient-proximal methods for split equilibrium problems
}

\titlerunning{Parallel extragradient-proximal methods for split equilibrium problems}        

\author{Dang Van Hieu
}

\authorrunning{D. V. Hieu} 

\institute{Dang Van Hieu, Department of Mathematics, Vietnam National University \at
               334 - Nguyen Trai Street, Ha Noi, Viet Nam\\
              Tel.: +84-979817776\\
              \email{dv.hieu83@gmail.com}           
}

\date{Received: date / Accepted: date}

\maketitle

\begin{abstract}
In this paper, we introduce two parallel extragradient-proximal methods for solving split equilibrium problems. The algorithms combine 
the extragradient method, the proximal method and the hybrid (outer approximation) method. The weak and strong convergence theorems for iterative 
sequences generated by the algorithms are established under widely used assumptions for equilibrium bifunctions.
\keywords{Equilibrium problem\and Split equilibrium problem\and Extragradient method\and Proximal method \and Parallel algorithm}
\end{abstract}
\section{Introduction}\label{intro}
Let $H_1,H_2$ be two real Hilbert spaces and $C,Q$ be two nonempty closed convex subsets of $H_1,H_2$, respectively. 
Let $A:H_1\to H_2$ be a bounded linear operator. Let $f:C\times C\to \Re$ and $F:Q\times Q\to \Re$ be two bifunctions with 
$f(x,x)=0$ for all $x\in C$ and $F(y,y)=0$ for all $y\in Q$. The split equilibrium problem (SEP) \cite{H2012} is stated as follows:
\begin{equation}\label{SEP}
\begin{cases}
\mbox{Find}~x^*\in C~\mbox{such that} ~f(x^*,y)\ge 0,~\forall y\in C,\\
\mbox{and}~u^*=Ax^*\in Q~\mbox{solves} ~F(u^*,u)\ge 0~\forall u\in Q.
\end{cases}
\end{equation}
Obviously, if $F=0$ then SEP becomes the following equilibrium problem (EP) \cite{BO1994}.
\begin{equation}\label{EP}
\mbox{Find}~x^*\in C~\mbox{such that} ~f(x^*,y)\ge 0,~\forall y\in C.
\end{equation}
The solution set of EP (\ref{EP}) for the bifunction $f$ on $C$ is denoted by $EP(f,C)$. SEP is very general in the sense that it includes many 
mathematical models as: split optimization problems, split fixed point problems, split 
variational inclusion problems, split variational inequality problems \cite{CE1994,CGR2012,CS2009,KR2013,KR2014,KS2014,M2011,M2010}. 
Split problems describe finding a solution of a problem whose image under a bounded linear transformation is a solution of another problem. 
A special case of SEP in practice is the split convex feasibility problem which had been studied and used as a
model in intensity-modulated radiation therapy treatment planning, see \cite{CBMT2006,CEKB2005}. 

Some algorithms for solving SEP can be found, for instance, in \cite{DKK2014,H2012,HD2014,KR2013,KR2014}. Almost proposed methods for 
SEPs based on the proximal method \cite{K2000} which consists of solving a regularized equilibrium problem, i.e., at current iteration, given 
$x_n$, the next iterate $x_{n+1}$ solves the following problem
 $$\mbox{Find}~x\in C~\mbox{such that}~ f(x,y)+\frac{1}{r_n}\left\langle y-x,x-x_n\right\rangle\ge 0,~\forall y\in C $$
or $x_{n+1}=T_{r_n}^f(x_n)$ where $T_{r_n}^f$ is the resolvent of the bifunction $f$ and $r_n>0$, see \cite{CH2005}. 
 In 2012, He \cite{H2012} used the proximal method and proposed the following algorithm 
$$
\begin{cases}
f_i(u_n^i,y)+\frac{1}{r_n}\left\langle y-u_n^i,u_n^i-x_n\right\rangle\ge 0,~\forall y\in C, i=1,\ldots,N,\\
\tau_n=\frac{u_n^1+\ldots+u_n^N}{N},\\
F(w_n,z)+\frac{1}{r_n}\left\langle z-w_n,w_n-\tau_n\right\rangle\ge 0,~\forall z\in Q,\\
x_{n+1}=P_C(\tau_n+\mu A^*(w_n-A\tau_n))
\end{cases}
$$
for finding an element in $\Omega=\left\{p\in \cap_{i=1}^k EP(f_i,C):Ap\in EP(F,Q)\right\}$. Under the assumption of 
the monotonicity of $f_i:C\times C\to \Re, ~F:Q\times Q\to \Re$ and suitable conditions on the parameters $r_n,\mu$, 
the author proved that $\left\{u_n^i\right\}$, $\left\{x_n\right\}$ converge weakly to some point in $\Omega$.

Very recently, for finding a common solution of a system of equilibrium problems for pseudomonotone monotone and Lipschitz-type 
continuous bifunctions $\left\{f_i\right\}_{i=1}^N$, the authors in \cite{HMA2015} have proposed the following parallel hybrid 
extragradient algorithm (also, see \cite{H2015a})
$$
\left\{
\begin{array}{ll}
&y_n^i = {\rm argmin} \{ \lambda f_i(x_n, y) +\frac{1}{2}||x_n-y||^2:  y \in C\} \quad i=1,\ldots,N,\\
&z_n^i = {\rm argmin} \{ \lambda f_i(y_n^i, y) +\frac{1}{2}||x_n-y||^2:  y \in C\} \quad i=1,\ldots,N,\\
&i_n = {\rm argmax}\{||z_n^i - x_n||: i =1,\ldots,N\},\bar{z}_n:=z^{i_n}_n,\\
&C_n = \{v \in C: ||\bar{z}_n - v|| \leq ||x_n-v||\},\\
&Q_n = \{ v \in C:  \langle x_0 - x_n, v - x_n \rangle \leq 0 \},\\
&x_{n+1}=P_{C_n\bigcap Q_n}x_0,n\ge 0.
\end{array}
\right.
$$
It has been proved that $\left\{x_n\right\}$, $\left\{y_n^i\right\}$, $\left\{z^i_n\right\}$ converge strongly to 
the projection of the starting point $x_0$ onto the solution set $\cap_{i=1}^N EP(f_i,C)$ under certain 
conditions on the parameter $\lambda$. The advantages of the extragradient method are that it is used for the 
class of pseudomonotone bifunctions and two optimization programs are solved at each iteration which seems 
to be numerically easier than the non-linear inequality in the proximal method, see for instance 
\cite{NSN2013,QMH2008,SNN2013} and the references therein.

In this paper, motivated by the recent works \cite{CGR2012,DKK2014,KR2013,KR2014} and the results above, we propose two 
parallel extragradient-proximal methods for SEPs for a finite family of bifunctions 
$\left\{f_i\right\}_{i=1}^N:C\times C\to \Re$ in $H_1$ and a system of bifunctions $\left\{F_j\right\}_{j=1}^M:Q\times Q\to\Re$ 
in $H_2$. In the first algorithm, we use the extragradient method for pseudomonotone EPs in $H_1$ and the proximal method for monotone EPs 
in $H_2$ to design the weak convergence algorithm. In order to obtain the strong convergence, we combine the first one with the hybrid method 
in the second algorithm. Under widely used assumptions for bifunctions, the convergence theorems are proved.

The paper is organized as follows: In Section \ref{pre}, we collect some definitions and preliminary results for the further use. Section \ref{main} 
deals with proposing and analyzing the convergence of the algorithms. 
\section{Preliminaries}\label{pre}
Let $C$ be a nonempty closed convex subset of a real Hilbert space $H$ with the inner product $\left\langle .,.\right\rangle$ and the induced norm 
$||.||$. We begin with some concepts of the monotonicity of a bifunction.
\begin{definition}\cite{BO1994,MO1992} A bifunction $f:C\times C\to \Re$ is said to be
\begin{itemize}
\item [$\rm i.$] strongly monotone on $C$ if there exists a constant $\gamma>0$ such that
$$ f(x,y)+f(y,x)\le -\gamma ||x-y||^2,~\forall x,y\in C; $$
\item [$\rm ii.$] monotone on $C$ if 
$$ f(x,y)+f(y,x)\le 0,~\forall x,y\in C; $$
\item [$\rm iii.$] pseudomonotone on $C$ if 
$$ f(x,y)\ge 0 \Longrightarrow f(y,x)\le 0,~\forall x,y\in C;$$
\item [$\rm iv.$] Lipschitz-type continuous on $C$ if there exist two positive constants $c_1,c_2$ such that
$$ f(x,y) + f(y,z) \geq f(x,z) - c_1||x-y||^2 - c_2||y-z||^2, ~ \forall x,y,z \in C.$$
\end{itemize}
\end{definition}
From the definitions above, it is clear that a strongly monotone bifunction is monotone and a monotone bifunction is pseudomonotone, i.e., 
$i.\Longrightarrow ii. \Longrightarrow iii.$ For solving SEP $(\ref{SEP})$, we assume that the bifunctions $f:C\times C\to \Re$ and 
$F:Q\times Q\to \Re$ satisfy the following Condition 1 and 
Condition 2, respectively.\\
\textbf{Condition 1}
\begin{itemize}
\item[\rm (A1)] $f$ is pseudomonotone on $C$ and $f(x,x)=0$ for all $x\in C$;
\item [\rm (A2)]  $f$ is Lipschitz-type continuous on $C$ with the constants $c_1,c_2$;
\item [\rm (A3)]   $f$ is jointly weakly continuous on $C\times C$ in the sense that, if $x,y\in C$ and $\left\{x_n\right\},\left\{y_n\right\}$ 
converge weakly to $x,y$, respectively, then $f(x_n,y_n)\to f(x,y)$ as $n\to\infty$;
\item [\rm (A4)]  $f(x,.)$ is convex and subdifferentiable on $C$  for every fixed $x\in C$
\end{itemize}
\textbf{Condition 2}
\begin{itemize}
\item[$\rm (\bar{A}1)$] $F$ is monotone on $C$ and $F(x,x)=0$ for all $x\in C$;
\item[$\rm (\bar{A}2)$] For all $x,y,z \in C$,
$$ \lim_{t\to 0^+}\sup F(tz+(1-t)x,y) \le F(x,y); $$
\item[$\rm (\bar{A}3)$] For all $x\in C$, $F(x,.)$ is convex and lower semicontinuous.
\end{itemize}

The following results concern with the monotone befunction $F$.
\begin{lemma}\label{ExitenceN0} \cite[Lemma 2.12]{CH2005} Let $C$ be a
closed and convex subset of a Hilbert space H, $F$ be a bifunction from $C\times C$ to
$\Re$ satisfying Condition 2 and let $r>0$,
$x\in H$. Then, there exists $z\in C$ such that
\begin{eqnarray*}
F(z,y)+\frac{1}{r}\langle y-z,z-x\rangle\geq0, \quad \forall y\in C.
\end{eqnarray*}
\end{lemma}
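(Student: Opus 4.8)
The plan is to realize the desired $z$ as an equilibrium point of the auxiliary bifunction $g(z,y):=rF(z,y)+\langle y-z,\,z-x\rangle$ on $C$, i.e. a point with $g(z,y)\ge 0$ for every $y\in C$; dividing by $r$ then gives the assertion. Observe that $g(z,z)=0$ and, by $(\bar{A}3)$, that $y\mapsto g(z,y)$ is convex and lower semicontinuous for each fixed $z$. I would first treat the case where $C$ is bounded — hence weakly compact, being closed and convex in a Hilbert space — via the Fan (KKM) lemma, and then drop boundedness through a truncation-plus-coercivity argument.

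\textbf{Bounded case.} For $y\in C$ set $T(y)=\{z\in C:g(z,y)\ge 0\}$, and, using the monotonicity $(\bar{A}1)$, which yields $F(z,y)\le -F(y,z)$, also the ``linearized'' set $S(y)=\{z\in C:-rF(y,z)+\langle y-z,\,z-x\rangle\ge 0\}\supseteq T(y)$. First I would show that $T$ is a KKM map: if $z=\sum_i\lambda_iy_i$ belonged to none of $T(y_1),\dots,T(y_m)$, then by convexity of $g(z,\cdot)$ one would get $0=g(z,z)\le\sum_i\lambda_ig(z,y_i)<0$, impossible; hence $S$ is a KKM map as well. Next, each $S(y)$ is weakly closed, because $z\mapsto -rF(y,z)$ is weakly upper semicontinuous (as $F(y,\cdot)$ is convex and l.s.c., hence weakly l.s.c., by $(\bar{A}3)$) and $z\mapsto\langle y-z,\,z-x\rangle=-\|z\|^2+\langle y+x,z\rangle-\langle y,x\rangle$ is weakly upper semicontinuous. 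By Fan's lemma there exists $z\in\bigcap_{y\in C}S(y)$. For arbitrary $y\in C$ and $t\in(0,1)$ I would then apply the membership at $y_t:=ty+(1-t)z\in C$, eliminate the term $F(y_t,z)$ using $0=F(y_t,y_t)\le tF(y_t,y)+(1-t)F(y_t,z)$, divide by $t$, and let $t\downarrow 0$ invoking $(\bar{A}2)$; a short check that $\tfrac{r}{1-t}F(y_t,y)$ stays bounded below shows its $\limsup$ is finite and at most $rF(z,y)$, giving $g(z,y)\ge 0$.

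\textbf{General case.} Put $C_k:=\{v\in C:\|v\|\le k\}$; for $k$ large enough $C_k\ne\emptyset$, and the restriction of $F$ to $C_k\times C_k$ still satisfies Condition 2, so the bounded case produces $z_k\in C_k$ with $g(z_k,y)\ge 0$ for all $y\in C_k$. Fixing some $\bar y\in C$ and testing at $y=\bar y$ (allowed once $k>\|\bar y\|$), then using monotonicity and the identity $\langle\bar y-z_k,\,z_k-x\rangle=-\|z_k-\bar y\|^2+\langle\bar y-z_k,\,\bar y-x\rangle$, one obtains $\|z_k-\bar y\|^2\le\langle\bar y-z_k,\,\bar y-x\rangle-rF(\bar y,z_k)$. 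Since the convex l.s.c. map $F(\bar y,\cdot)$, extended by $+\infty$ outside $C$, is minorized on $H$ by an affine functional, the right-hand side is $\le\alpha\|z_k-\bar y\|+\beta$ with $\alpha,\beta\ge 0$ not depending on $k$; hence $\|z_k-\bar y\|$, and so $\|z_k\|$, is bounded by some $R$. For $k>R$ we have $\|z_k\|<k$, so given any $y\in C$ a small enough $t>0$ keeps $w:=(1-t)z_k+ty$ in $C_k$; applying $g(z_k,w)\ge 0$, using $F(z_k,w)\le tF(z_k,y)$ by convexity of $F(z_k,\cdot)$, and dividing by $t$ yields $g(z_k,y)\ge 0$ for every $y\in C$, which is the claim.

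I expect two steps to be the main obstacles. The first is the weak closedness of the sets $S(y)$: this is the only place where convexity and lower semicontinuity of $F(y,\cdot)$ are genuinely used, and it hinges on the weak upper semicontinuity of $z\mapsto-\|z\|^2$. The second is the coercivity estimate in the general case, where the subtlety is that $x$ need not belong to $C$, so the natural test point $y=x$ is unavailable; one must instead test at a fixed $\bar y\in C$ and bound $F(\bar y,z_k)$ from below by an affine functional, so that the quadratic term $-\|z_k-\bar y\|^2$ dominates and forces boundedness of $\{z_k\}$.
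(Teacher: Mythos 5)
Your proof is correct, but note that the paper itself offers no proof of this lemma: it is quoted directly from Combettes and Hirstoaga \cite{CH2005} (Lemma 2.12 there), which in turn rests on the classical existence theorem of Blum and Oettli \cite{BO1994}. What you have written is essentially a self-contained reconstruction of that classical argument: the KKM--Fan lemma applied in the weak topology to the ``linearized'' sets $S(y)$ (monotonicity gives $T(y)\subseteq S(y)$, and weak closedness follows from weak lower semicontinuity of $F(y,\cdot)$ and of the norm, so weak compactness of a bounded closed convex $C$ suffices), then a Minty-type passage from $\bigcap_{y}S(y)$ back to $\bigcap_{y}T(y)$ via $(\bar{A}2)$ and convexity of $F(\cdot\,,\cdot)$ in the second argument, and finally a truncation argument in which the strongly monotone regularizing term $\langle y-z,z-x\rangle$ supplies the coercivity that removes the boundedness assumption on $C$. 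The two points you single out as delicate are handled correctly: the lower bound on $\frac{r}{1-t}F(y_t,y)$ is immediate from the inequality it appears in, so the limsup computation with $(\bar{A}2)$ goes through; and the continuous affine minorant of the proper convex l.s.c. extension of $F(\bar{y},\cdot)$ (available because $C$ is closed) yields constants independent of $k$, so the quadratic term dominates and $\{z_k\}$ stays bounded, after which the interior-of-the-truncation argument localizes the inequality to all of $C$. The benefit of your route is a complete proof from Condition 2 alone, whereas the paper simply delegates to the literature; the cost is length, which is presumably why the author cites rather than proves it.
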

\begin{lemma}\label{lem.CH2005}\cite[Lemma 2.12]{CH2005} Let
$C$ be a closed and convex subset of a Hilbert space $H$, $F$ be a bifunction from
$C\times C$ to $\Re$ satisfying Condition 2.
For all $r>0$ and $x\in H$, define the mapping
\begin{eqnarray*}
T_r^F x=\{z\in C:F(z,y)+\frac{1}{r}\langle y-z,z-x\rangle\geq0, \quad
\forall y\in C\}.
\end{eqnarray*}
Then the following hold:

{\rm (B1)} $T_r^F$ is single-valued;

{\rm (B2)} $T_r^F$ is a firmly nonexpansive, i.e., for
all $x, y\in H,$
\begin{eqnarray*}
||T_r^Fx-T_r^Fy||^2\leq\langle T_r^Fx-T_r^Fy,x-y\rangle;
\end{eqnarray*}

{\rm (B3)} $Fix(T_r^F)=EP(F,C)$, where $Fix(T_r^F)$ is the fixed point set of $T_r^F$;

{\rm (B4)} $EP(F,C)$ is closed and convex.
\end{lemma}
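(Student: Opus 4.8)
The plan is to prove the four assertions in sequence, each leaning on the previous one, and to use Lemma~\ref{ExitenceN0} at the outset to know that $T_r^F x$ is nonempty so that the statements are meaningful.

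\textbf{(B1).} I would take two candidates $z_1,z_2\in T_r^F x$, write the defining inequality for $z_1$ with test point $y=z_2$ and the one for $z_2$ with test point $y=z_1$, and add the two. The monotonicity assumption $(\bar{A}1)$ gives $F(z_1,z_2)+F(z_2,z_1)\le 0$, so the bifunction terms drop out and the remaining inner products collapse to $-\frac1r\|z_1-z_2\|^2\ge 0$, forcing $z_1=z_2$. Together with Lemma~\ref{ExitenceN0}, this makes $T_r^F$ a single-valued map on $H$.

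\textbf{(B2).} The computation is the same pattern: for $x,y\in H$ put $z_1=T_r^F x$, $z_2=T_r^F y$, write the defining inequality for $z_1$ tested at $z_2$ and for $z_2$ tested at $z_1$, add, use $(\bar{A}1)$ to discard the $F$-terms, and regroup the inner-product terms as $\langle z_2-z_1,(z_1-z_2)+(y-x)\rangle\ge 0$. This rearranges exactly to $\|T_r^F x-T_r^F y\|^2\le\langle T_r^F x-T_r^F y,\,x-y\rangle$, the claimed firm nonexpansiveness; in particular $T_r^F$ is nonexpansive, hence continuous.

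\textbf{(B3) and (B4).} For (B3), if $z\in EP(F,C)$ then $F(z,y)\ge 0$ for all $y\in C$, so the defining inequality for $T_r^F z$ is satisfied at the point $z$ itself (the inner-product term vanishes), whence $z=T_r^F z$ by (B1); conversely, if $z=T_r^F z$ then substituting into the definition gives $F(z,y)=F(z,y)+\frac1r\langle y-z,0\rangle\ge 0$ for all $y$, so $z\in EP(F,C)$. Thus $Fix(T_r^F)=EP(F,C)$. For (B4), I would then deduce from (B2)--(B3) that $EP(F,C)$, being the fixed-point set of a nonexpansive self-map of a Hilbert space, is closed (by continuity of $T_r^F$) and convex (by the standard argument: if $u,v\in Fix(T_r^F)$ and $z$ lies on the segment $[u,v]$, then $\|u-v\|\le\|u-T_r^F z\|+\|T_r^F z-v\|\le\|u-z\|+\|z-v\|=\|u-v\|$, so equality holds throughout and strict convexity of the Hilbert norm forces $T_r^F z=z$). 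Alternatively (B4) can be obtained directly by a Minty-type reformulation $EP(F,C)=\{z\in C:F(y,z)\le 0\ \forall y\in C\}$, which exhibits $EP(F,C)$ as an intersection of the closed convex sets $\{z:F(y,z)\le 0\}$, closed and convex because $F(y,\cdot)$ is convex and lower semicontinuous by $(\bar{A}3)$.

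There is no deep obstacle here; every step is a two-line manipulation. The only point that needs genuine care is the convexity half of (B4): via the fixed-point route it uses the strict convexity of the Hilbert-space norm in an essential way, while via the Minty reformulation it is exactly where the upper-hemicontinuity assumption $(\bar{A}2)$ is needed (to pass between ``$F(z,y)\ge 0$ for all $y$'' and ``$F(y,z)\le 0$ for all $y$''). I would write whichever of these two I choose out in full detail and treat the rest as routine.
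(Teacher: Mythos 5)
The paper offers no proof of this lemma at all---it is quoted directly from Combettes and Hirstoaga \cite{CH2005}---so the only comparison to make is with the standard argument from that reference, which is exactly what you reconstruct: adding the two defining inequalities and invoking monotonicity $(\bar{A}1)$ gives (B1) and (B2), substitution plus (B1) gives (B3), and (B4) follows either from the closedness and convexity of the fixed point set of the (firmly) nonexpansive map $T_r^F$ or from the Minty reformulation using $(\bar{A}2)$--$(\bar{A}3)$, with Lemma \ref{ExitenceN0} supplying nonemptiness. Your proposal is correct; the only cosmetic difference from the classical write-up is that there (B1) is obtained as the special case $x=y$ of the (B2) computation rather than by a separate argument.
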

\begin{lemma}\cite[Lemma 2.5]{H2012}\label{lem.H2012}
For $r,s>0$ and $x,y\in H$. Under the assumptions of Lemma \ref{lem.CH2005}, then 
$$ ||T_r^F(x)-T_s^F(y)||\le||x-y||+\frac{|s-r|}{s}||T_s^F(y)-y||.$$
\end{lemma}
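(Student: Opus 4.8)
The plan is to combine the variational inequalities that define the two resolvents with the monotonicity of $F$. Write $z_1=T_r^F(x)$ and $z_2=T_s^F(y)$; these points exist and are unique by Lemmas \ref{ExitenceN0} and \ref{lem.CH2005}, and by definition
$$F(z_1,t)+\frac1r\langle t-z_1,\,z_1-x\rangle\ge 0,\qquad F(z_2,t)+\frac1s\langle t-z_2,\,z_2-y\rangle\ge 0\quad\text{for all }t\in C.$$
The natural step is to test the first inequality at $t=z_2$ and the second at $t=z_1$ and add them: the bifunction contributions become $F(z_1,z_2)+F(z_2,z_1)$, which is $\le 0$ by the monotonicity assumption $(\bar A1)$, so the remaining inner-product terms satisfy
$$\Big\langle z_2-z_1,\ \frac{z_1-x}{r}-\frac{z_2-y}{s}\Big\rangle\ge 0.$$

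From here the argument is purely algebraic. Multiplying by $r>0$ and writing $\tfrac rs(z_2-y)=(z_2-y)-\tfrac{s-r}{s}(z_2-y)$ converts the last inequality into
$$\Big\langle z_2-z_1,\ (z_1-z_2)-(x-y)+\tfrac{s-r}{s}(z_2-y)\Big\rangle\ge 0,$$
that is, $\|z_1-z_2\|^2\le\langle z_2-z_1,\,y-x\rangle+\tfrac{s-r}{s}\langle z_2-z_1,\,z_2-y\rangle$. Applying the Cauchy--Schwarz inequality to each term on the right gives $\|z_1-z_2\|^2\le\|z_1-z_2\|\,\|x-y\|+\tfrac{|s-r|}{s}\,\|z_1-z_2\|\,\|z_2-y\|$. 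If $z_1=z_2$ the claim is trivial; otherwise divide by $\|z_1-z_2\|$ and recall that $z_2-y=T_s^F(y)-y$ to obtain exactly $\|T_r^F(x)-T_s^F(y)\|\le\|x-y\|+\tfrac{|s-r|}{s}\|T_s^F(y)-y\|$.

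There is no genuinely hard step; the only point requiring attention is the bookkeeping in the decomposition $\tfrac rs(z_2-y)=(z_2-y)-\tfrac{s-r}{s}(z_2-y)$. One must clear denominators by multiplying by $r$ rather than by $s$, so that the residual $z_2-y$ of $T_s^F$ survives while the term $z_1-x$ cancels (if one instead multiplied by $s$, an uncontrolled term $\tfrac{s-r}{r}(z_1-x)$ involving $T_r^F(x)-x$ would appear, and that quantity is not bounded by the right-hand side of the claim); one must also keep track of the sign of $s-r$ when passing to $|s-r|$ in the Cauchy--Schwarz estimate. Everything else is routine.
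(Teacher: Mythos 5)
Your proof is correct, and it is the standard argument: test the two resolvent inequalities at each other's solution, add, use monotonicity $(\bar A1)$, and rearrange with the decomposition $\tfrac{r}{s}(z_2-y)=(z_2-y)-\tfrac{s-r}{s}(z_2-y)$ before applying Cauchy--Schwarz. The paper itself gives no proof (it cites \cite[Lemma 2.5]{H2012}), and your argument coincides with the one used in that reference, including the correct choice of clearing the denominator $r$ so that only the residual $T_s^F(y)-y$ survives.
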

The metric projection $P_C:H\to C$ is defined by $P_C x=\underset{y\in C}{\arg\min}\left\{\left\|y-x\right\|\right\}$. 
It is well-known that $P_C$ has the following characteristic properties, see \cite{GR1984} for more details.
\begin{lemma}\label{lem.PC}
Let $P_C:H\to C$ be the metric projection from $H$ onto $C$. Then
\begin{itemize}
\item [$\rm i.$] For all $x\in C, y\in H$,
\begin{equation}\label{eq:ProperOfPC}
\left\|x-P_C y\right\|^2+\left\|P_C y-y\right\|^2\le \left\|x-y\right\|^2.
\end{equation}
\item [$\rm ii.$] $z=P_C x$ if and only if 
\begin{equation}\label{eq:EquivalentPC}
\left\langle x-z,z-y \right\rangle \ge 0,\quad \forall y\in C.
\end{equation}
\end{itemize}
\end{lemma}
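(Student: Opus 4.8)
The plan is to establish part (ii) first and then deduce part (i) from it in one line, since (ii) is the variational characterization on which everything rests. I would begin by recalling the standard fact (the one referenced via \cite{GR1984}) that for a nonempty closed convex subset $C$ of a Hilbert space the map $P_C$ is well defined, i.e. the problem $\min_{y\in C}\|y-x\|$ has a unique solution; this follows from the parallelogram identity together with completeness of $H$, and I would simply invoke it rather than reprove it. It then remains to characterize that solution.

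For the forward implication of (ii), assume $z=P_C x$. Fix $y\in C$ and $t\in(0,1]$; by convexity the point $z+t(y-z)$ lies in $C$, so minimality of $z$ gives $\|x-z\|^2\le\|x-z-t(y-z)\|^2$. Expanding the right-hand side yields $2t\langle x-z,y-z\rangle\le t^2\|y-z\|^2$; dividing by $2t$ and letting $t\to0^+$ gives $\langle x-z,y-z\rangle\le0$, i.e. $\langle x-z,z-y\rangle\ge0$ for all $y\in C$. For the converse, assume $z\in C$ satisfies $\langle x-z,z-y\rangle\ge0$ for every $y\in C$. For an arbitrary $y\in C$, writing $x-y=(x-z)+(z-y)$ and expanding,
\begin{equation*}
\|x-y\|^2=\|x-z\|^2+2\langle x-z,z-y\rangle+\|z-y\|^2\ge\|x-z\|^2.
\end{equation*}
Hence $z$ minimizes $\|\,\cdot\,-x\|$ over $C$, so $z=P_C x$; incidentally this also reproves uniqueness of the projection.

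Finally, for (i) I would apply (ii) with $y$ in place of $x$ and $P_C y$ in place of $z$: for every $x\in C$ we obtain $\langle y-P_C y,P_C y-x\rangle\ge0$, which after rearranging the inner product is the same as $\langle x-P_C y,P_C y-y\rangle\ge0$. Then expanding $\|x-y\|^2=\|x-P_C y\|^2+2\langle x-P_C y,P_C y-y\rangle+\|P_C y-y\|^2$ and discarding the nonnegative cross term gives precisely $\|x-P_C y\|^2+\|P_C y-y\|^2\le\|x-y\|^2$. I do not expect any genuine obstacle; the only points needing a little care are the sign bookkeeping when moving between the two equivalent forms $\langle x-z,z-y\rangle\ge0$ and $\langle x-z,y-z\rangle\le0$, and the (standard, cited) existence and uniqueness of $P_C x$, which I take as granted.
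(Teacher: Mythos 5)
Your proof is correct: the variational characterization in (ii) is established by the standard convexity argument (perturbing $z$ by $t(y-z)$ and letting $t\to 0^+$, plus the expansion for the converse), and (i) follows from (ii) by expanding $\left\|x-y\right\|^2$ and dropping the nonnegative cross term. The paper itself offers no proof of this lemma, merely citing \cite{GR1984}, and your argument is exactly the classical one given there, so there is nothing to add.
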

Any Hilbert space satisfies Opial's conditionc \cite{O1967}, i.e., if $\left\{x_n\right\}\subset H$ converges weakly to $x$ then 
$$ \lim\inf_{n\to\infty}||x_n-x||< \lim\inf_{n\to\infty}||x_n-y||,~\forall y\in H,~y\ne x.$$
\section{Main results} \label{main}
In this section, we present our algorithms and prove their convergence. Without loss of generality, we assume that all bifunctions $f_i:C\times C\to \Re$ 
satisfing Lipschitz-type continuous condition with same constants $c_1,c_2$. Indeed, if $f_i$ is Lipschitz-type continuous with two constants $c_1^i,c_2^i$ 
then we set $c_1=\max\left\{c_1^i:i=1,\ldots,N\right\}$ and $c_2=\max\left\{c_2^i:i=1,\ldots,N\right\}$. From the definition of the Lipschitz-type continuity, 
$f_i$ is also Lipschitz-continuous with the constants $c_1,c_2$. We denote the solution set of SEP for $\left\{f_i\right\}_{i=1}^N$ and 
$\left\{F_j\right\}_{j=1}^M$ by 
$$ 
\Omega=\left\{x^*\in \cap_{i=1}^NEP(f_i,C): Ax^*\in \cap_{j=1}^M EP(F_j,Q) \right\}
 $$
and assume that $\Omega$ is nonempty. We start with the following algorithm.
\begin{algorithm}\label{algor1}(Parallel extragradient-proximal method for SEPs)\\
\textbf{Initialization.} Chose $x_0\in C,~C_0=C$. The control parameters $\lambda,\mu,r_n$ satisfy the following conditions 
$$0<\lambda<\min\left\{\frac{1}{2c_1},\frac{1}{2c_2}\right\},~r_n\ge d>0,~0<\mu<\frac{2}{||A||^2}.$$
\textbf{Step 1.} Solve $N$ strongly convex optimization programs in parallel
$$
\begin{cases}
y_n^i=\arg\min\left\{\lambda f_i(x_n,y)+\frac{1}{2}||y-x_n||^2:y\in C\right\},i=1,\ldots,N,\\
z_n^i=\arg\min\left\{\lambda f_i(y_n^i,y)+\frac{1}{2}||y-x_n||^2:y\in C\right\},i=1,\ldots,N.
\end{cases}
$$
\textbf{Step 2.} Find among $z_n^i$ the furthest element from $x_n$, i.e.,
$$ \bar{z}_n=\arg\max\left\{||z_n^i-x_n||:i=1,\ldots,N\right\}. $$
\textbf{Step 3.} Solve $M$ strongly monotone regularized equilibrium programs in parallel
$$ w_n^j=T_{r_n}^{F_j}(A\bar{z}_n),j=1,\ldots,M. $$
\textbf{Step 4.} Find among $w_n^j$ the furthest element from $A\bar{z}_n$, i.e.,
$$ \bar{w}_n=\arg\max\left\{||w_n^j-A\bar{z}_n||:j=1,\ldots,M\right\}. $$
\textbf{Step 5.} Compute $x_{n+1}=P_C\left(\bar{z}_n+\mu A^*(\bar{w}_n-A\bar{z}_n)\right)$. 
Set $n=n+1$ and go back \textbf{Step 1}.
\end{algorithm}
We need the following lemma to prove the convergence of Algorithm \ref{algor1}.
\begin{lemma}\cite[Lemma 3.1]{A2013} (cf. \cite[Theorem 3.2]{QMH2008})\label{lem1}
Suppose that $x^*\in \cap_{i=1}^NEP(f_i,C)$ and $\left\{x_n\right\}$, $\left\{y_n^i\right\}$, $\left\{z_n^i\right\}$ are 
the sequences generated by Algorithm \ref{algor1}. Then
\begin{itemize}
\item [$\rm i.$] $\lambda \left(f_i(x_n,y)-f_i(x_n,y_n^i)\right) \ge \left\langle y_n^i-x_n, y_n^i-y\right\rangle, \forall y\in C.$
\item [$\rm ii.$]$||z_n^i-x^*||^2\le ||x_n-x^*||^2-(1-2\lambda c_1)||y_n^i-x_n||^2-(1-2\lambda c_2)||y_n^i-z_n^i||^2.$
\end{itemize}
\end{lemma}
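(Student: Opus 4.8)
The plan is to prove the two assertions of Lemma~\ref{lem1} by exploiting the optimality conditions of the two strongly convex programs in Step~1, together with the pseudomonotonicity and Lipschitz-type continuity of the bifunctions $f_i$. These are by now standard facts about the extragradient iteration for equilibrium problems (going back to Quoc--Muu--Hien), so the argument is a matter of assembling the right inequalities; the only mild care needed is bookkeeping with the index $i$.

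For part (i), I would start from $y_n^i=\arg\min\{\lambda f_i(x_n,y)+\tfrac12\|y-x_n\|^2:y\in C\}$. Since $y\mapsto \lambda f_i(x_n,y)+\tfrac12\|y-x_n\|^2$ is convex and subdifferentiable on $C$ by (A4), the minimality of $y_n^i$ gives $0\in \partial\big(\lambda f_i(x_n,\cdot)+\tfrac12\|\cdot-x_n\|^2\big)(y_n^i)+N_C(y_n^i)$, where $N_C$ is the normal cone. Hence there is $\xi\in\partial_2 f_i(x_n,y_n^i)$ and $\bar{\xi}\in N_C(y_n^i)$ with $\lambda\xi+(y_n^i-x_n)+\bar{\xi}=0$. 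Pairing with $y-y_n^i$ for arbitrary $y\in C$, using $\langle\bar{\xi},y-y_n^i\rangle\le0$, gives $\lambda\langle\xi,y-y_n^i\rangle\ge\langle y_n^i-x_n,y_n^i-y\rangle$, and then the subgradient inequality $f_i(x_n,y)-f_i(x_n,y_n^i)\ge\langle\xi,y-y_n^i\rangle$ yields exactly the claim in (i). The same optimality-condition reasoning applied to the second program (with $y_n^i$ in place of $x_n$ in the bifunction slot) gives the companion inequality $\lambda\big(f_i(y_n^i,y)-f_i(y_n^i,z_n^i)\big)\ge\langle z_n^i-x_n,z_n^i-y\rangle$ for all $y\in C$, which feeds into part (ii).

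For part (ii), the key steps are: specialize the second inequality at $y=x^*$ to get $\lambda\big(f_i(y_n^i,x^*)-f_i(y_n^i,z_n^i)\big)\ge\langle z_n^i-x_n,z_n^i-x^*\rangle$; since $x^*\in EP(f_i,C)$ we have $f_i(x^*,y_n^i)\ge0$, so pseudomonotonicity (A1) gives $f_i(y_n^i,x^*)\le0$, hence $-\lambda f_i(y_n^i,z_n^i)\ge\langle z_n^i-x_n,z_n^i-x^*\rangle$. Next, apply part (i) at $y=z_n^i$ to bound $f_i(x_n,z_n^i)-f_i(x_n,y_n^i)$, and invoke Lipschitz-type continuity (A2) in the form $f_i(x_n,z_n^i)\le f_i(x_n,y_n^i)+f_i(y_n^i,z_n^i)+c_1\|x_n-y_n^i\|^2+c_2\|y_n^i-z_n^i\|^2$ to replace $f_i(y_n^i,z_n^i)$ by terms involving $f_i(x_n,\cdot)$ and the two squared norms. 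Combining these with the identity $2\langle z_n^i-x_n,z_n^i-x^*\rangle=\|z_n^i-x_n\|^2+\|z_n^i-x^*\|^2-\|x_n-x^*\|^2$ (and a similar split to handle $\|y_n^i-z_n^i\|^2$ against $\|z_n^i-x_n\|^2$), the cross terms telescope and one is left with $\|z_n^i-x^*\|^2\le\|x_n-x^*\|^2-(1-2\lambda c_1)\|y_n^i-x_n\|^2-(1-2\lambda c_2)\|y_n^i-z_n^i\|^2$.

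The main obstacle is purely computational: in part (ii) one must organize the chain of substitutions so that all occurrences of the bifunction values $f_i(x_n,y_n^i)$ and $f_i(y_n^i,z_n^i)$ cancel, leaving only the squared-norm terms with the stated coefficients. This is where the choice $\lambda<\min\{1/(2c_1),1/(2c_2)\}$ is used, to keep the coefficients $(1-2\lambda c_1)$ and $(1-2\lambda c_2)$ positive. No weak-continuity hypothesis (A3) is needed here; it will be used later in the convergence analysis, not in this lemma. Since the statement is quoted from \cite{A2013,QMH2008}, I would ultimately just cite those references, but the sketch above is the self-contained route.
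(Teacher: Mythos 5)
Your proof is correct and follows essentially the same standard argument as the sources the paper cites for this lemma, namely \cite[Lemma 3.1]{A2013} and \cite[Theorem 3.2]{QMH2008} (the paper itself states the lemma without proof): optimality conditions for the two strongly convex subproblems, the companion inequality at $y=x^*$, pseudomonotonicity to kill $f_i(y_n^i,x^*)$, the Lipschitz-type estimate, and the polarization identities. One minor point: the inequality in (ii) actually holds for every $\lambda>0$; the restriction $\lambda<\min\{1/(2c_1),1/(2c_2)\}$ is not used in deriving the lemma but only afterwards, to make the coefficients $(1-2\lambda c_1)$ and $(1-2\lambda c_2)$ positive in the convergence analysis.
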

\begin{theorem}[Weak convergence theorem]\label{theo1}
Let $C,Q$ be two nonempty closed convex subsets of two real Hilbert spaces $H_1$ and $H_2$, respectively. Let 
$\left\{f_i\right\}_{i=1}^N:C\times C\to \Re$ be a finite family of bifunctions satisfying Condition 1 and 
$\left\{F_j\right\}_{j=1}^M:Q\times Q\to \Re$ be a finite family 
of bifunctions satisfying Condition 2. Let $A:H_1\to H_2$ be a 
bounded linear operator with the adjoint $A^*$. In addition the solution set $\Omega$
is nonempty. Then, the sequences $\left\{x_n\right\}$, $\left\{y^i_n\right\}$, $\left\{z^i_n\right\}$ generated by Algorithm \ref{algor1} 
converge weakly to some point 
$p\in \cap_{i=1}^NEP(f_i,C)$ and $\left\{w^j_n\right\}$ converges weakly to $Ap\in \cap_{j=1}^M EP(F_j,Q)$.
\end{theorem}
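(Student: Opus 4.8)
The plan is to combine the Fejér-type estimates from the extragradient part with the firm nonexpansiveness of the resolvents $T_{r_n}^{F_j}$, show that the whole sequence $\{x_n\}$ is quasi-Fejér monotone with respect to $\Omega$, and then extract weak cluster points via Opial's condition. Fix $x^*\in\Omega$, so $x^*\in\cap_{i=1}^N EP(f_i,C)$ and $Ax^*\in\cap_{j=1}^M EP(F_j,Q)$. First I would use Lemma \ref{lem1}(ii) together with $\bar z_n = z_n^{i_n}$ to get
$$\|\bar z_n - x^*\|^2 \le \|x_n-x^*\|^2 - (1-2\lambda c_1)\|y_n^{i_n}-x_n\|^2 - (1-2\lambda c_2)\|y_n^{i_n}-z_n^{i_n}\|^2,$$
where both bracketed constants are positive by the choice of $\lambda$. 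Next I would estimate $\|x_{n+1}-x^*\|^2$ by writing $x_{n+1}=P_C(u_n)$ with $u_n=\bar z_n+\mu A^*(\bar w_n - A\bar z_n)$, using Lemma \ref{lem.PC}(i) and the fact that $x^*\in C$ to obtain $\|x_{n+1}-x^*\|^2\le\|u_n-x^*\|^2-\|x_{n+1}-u_n\|^2$. Expanding $\|u_n-x^*\|^2 = \|\bar z_n - x^*\|^2 + 2\mu\langle \bar z_n - x^*, A^*(\bar w_n - A\bar z_n)\rangle + \mu^2\|A^*(\bar w_n-A\bar z_n)\|^2$ and moving the inner product to $H_2$ via $A^*$, the cross term becomes $2\mu\langle A\bar z_n - Ax^*, \bar w_n - A\bar z_n\rangle$. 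Since $\bar w_n = w_n^{j_n} = T_{r_n}^{F_j}(A\bar z_n)$ and $Ax^* = T_{r_n}^{F_j}(Ax^*)$ (by (B3)), the firm nonexpansiveness (B2) applied to these two points gives $\|\bar w_n - Ax^*\|^2 \le \langle \bar w_n - Ax^*, A\bar z_n - Ax^*\rangle$, which rearranges to $2\langle A\bar z_n - Ax^*, \bar w_n - A\bar z_n\rangle \le -\|\bar w_n - A\bar z_n\|^2$. Combining, and using $\|A^*(\bar w_n - A\bar z_n)\|^2\le\|A\|^2\|\bar w_n - A\bar z_n\|^2$, I get
$$\|x_{n+1}-x^*\|^2 \le \|\bar z_n - x^*\|^2 - \mu(2-\mu\|A\|^2)\|\bar w_n - A\bar z_n\|^2 - \|x_{n+1}-u_n\|^2,$$
and the coefficient $\mu(2-\mu\|A\|^2)$ is positive because $0<\mu<2/\|A\|^2$.

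Chaining the two displayed inequalities yields $\|x_{n+1}-x^*\|^2 \le \|x_n-x^*\|^2 - \delta_n$, where $\delta_n\ge 0$ collects the three nonnegative error terms. Hence $\{\|x_n-x^*\|\}$ is nonincreasing, so it converges; in particular $\{x_n\}$ is bounded, and summing the telescoping inequality gives $\sum_n \delta_n<\infty$. Therefore $\|y_n^{i_n}-x_n\|\to 0$, $\|y_n^{i_n}-z_n^{i_n}\|\to 0$ (so $\|z_n^{i_n}-x_n\|\to 0$), $\|\bar w_n - A\bar z_n\|\to 0$, and $\|x_{n+1}-u_n\|\to 0$; combined with $\|u_n - \bar z_n\| = \mu\|A^*(\bar w_n - A\bar z_n)\|\to 0$ and $\|\bar z_n - x_n\|=\|z_n^{i_n}-x_n\|\to 0$, this also gives $\|x_{n+1}-x_n\|\to 0$. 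By the choice of $\bar z_n$ as the furthest $z_n^i$ from $x_n$, we get $\|z_n^i - x_n\|\to 0$ for every $i$, and then rerunning Lemma \ref{lem1}(ii) for each index $i$ forces $\|y_n^i - x_n\|\to 0$ for all $i$ as well; similarly, by the choice of $\bar w_n$ as the furthest $w_n^j$ from $A\bar z_n$, $\|w_n^j - A\bar z_n\|\to 0$ for every $j$, and since $A\bar z_n - A x_n\to 0$ we get $\|w_n^j - Ax_n\|\to 0$ for all $j$.

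For the final identification of a weak cluster point, let $p$ be the weak limit of a subsequence $\{x_{n_k}\}$ (which exists by boundedness). Then $y_{n_k}^i\rightharpoonup p$ and $z_{n_k}^i\rightharpoonup p$ for all $i$. Using Lemma \ref{lem1}(i), for any fixed $y\in C$ we have $\lambda(f_i(x_{n_k},y) - f_i(x_{n_k},y_{n_k}^i)) \ge \langle y_{n_k}^i - x_{n_k}, y_{n_k}^i - y\rangle$; the right side tends to $0$ because $\|y_{n_k}^i - x_{n_k}\|\to 0$ and $\{y_{n_k}^i\}$ is bounded, and by joint weak continuity (A3) the left side tends to $\lambda(f_i(p,y) - f_i(p,p)) = \lambda f_i(p,y)$. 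Hence $f_i(p,y)\ge 0$ for all $y\in C$, i.e. $p\in EP(f_i,C)$ for each $i$, so $p\in\cap_{i=1}^N EP(f_i,C)$. Next, $A\bar z_{n_k} \rightharpoonup Ap$ (as $A$ is bounded linear and $\bar z_{n_k}=z_{n_k}^{i_{n_k}}$, with $\|\bar z_{n_k}-x_{n_k}\|\to 0$), and $\bar w_{n_k} = T_{r_{n_k}}^{F_{j_{n_k}}}(A\bar z_{n_k})$; using the defining inequality of $w_{n_k}^j$, $F_j(w_{n_k}^j, u) + \frac1{r_{n_k}}\langle u - w_{n_k}^j, w_{n_k}^j - A\bar z_{n_k}\rangle \ge 0$ for all $u\in Q$, together with $\|w_{n_k}^j - A\bar z_{n_k}\|\to 0$, $r_{n_k}\ge d>0$, the monotonicity of $F_j$, and $(\bar A2)$, a standard passage to the limit (first replace $F_j(w_{n_k}^j,u)$ using monotonicity by $-F_j(u,w_{n_k}^j)$, pass to the limit, then use a convex-combination argument with $(\bar A2)$ and $(\bar A3)$) gives $F_j(Ap,u)\ge 0$ for all $u\in Q$, so $Ap\in\cap_{j=1}^M EP(F_j,Q)$. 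Thus $p\in\Omega$. Finally, to upgrade to weak convergence of the whole sequence I would invoke the standard Opial argument: if $p_1,p_2$ were two distinct weak cluster points, both lie in $\Omega$, so both $\lim_n\|x_n-p_1\|$ and $\lim_n\|x_n-p_2\|$ exist; writing $\|x_n-p_1\|^2 = \|x_n-p_2\|^2 + 2\langle x_n - p_2, p_2 - p_1\rangle + \|p_2-p_1\|^2$ and passing to the limit along the two subsequences leads to a contradiction (equivalently, apply Opial's inequality directly). Hence $\{x_n\}$ converges weakly to some $p\in\Omega$, and consequently $\{y_n^i\}$, $\{z_n^i\}$ converge weakly to $p$ and $\{w_n^j\}$ converges weakly to $Ap$.

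The main obstacle I anticipate is the limiting argument that passes from the regularized inequalities defining $w_n^j = T_{r_n}^{F_j}(A\bar z_n)$ to $F_j(Ap,\cdot)\ge 0$ under only Condition 2 (monotonicity plus the weak upper-hemicontinuity $(\bar A2)$ and lower semicontinuity $(\bar A3)$, rather than joint weak continuity): one cannot directly take weak limits inside $F_j$, so the argument must route through monotonicity to flip the first argument, exploit $\|w_n^j - A\bar z_n\|\to 0$ and $r_n\ge d$ to kill the linear term, and then recover the desired inequality by testing along the segment $tu + (1-t)Ap$ and letting $t\to 0^+$. Keeping track of which sequences converge to what (the interplay between $x_n$, $\bar z_n$, $A\bar z_n$, $w_n^j$, and their various differences tending to zero) is the bookkeeping-heavy part, but it is routine once the two basic Fejér estimates above are in hand.
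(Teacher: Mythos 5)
Your proposal is correct and, for the bulk of the argument, follows the same road as the paper: the Fej\'er-type estimate from Lemma \ref{lem1}.ii, the firm nonexpansiveness of $T_{r_n}^{F_j}$ to control the cross term, the positivity of $\mu(2-\mu\|A\|^2)$, the vanishing of $\|y_n^i-x_n\|$, $\|z_n^i-x_n\|$, $\|w_n^j-A\bar z_n\|$, identification of the weak limit in $\cap_i EP(f_i,C)$ via Lemma \ref{lem1}.i and (A3), and the Opial argument for uniqueness of the weak limit. Two genuine differences are worth noting. First, your bookkeeping: you keep the extra term $-\|x_{n+1}-u_n\|^2$ from Lemma \ref{lem.PC}.i and telescope to get $\sum_n\delta_n<\infty$, whereas the paper uses plain nonexpansiveness of $P_C$ and extracts the vanishing terms from the fact that $\|x_n-x^*\|$ and $\|\bar z_n-x^*\|$ converge to the same limit (see (\ref{eq:6*})--(\ref{eq:10*})); both work, yours is marginally more informative. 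Second, and more substantively, for $Ap\in\cap_j EP(F_j,Q)$ you argue directly from the resolvent's defining inequality, flipping the first argument by monotonicity, using weak lower semicontinuity from $(\bar A3)$ together with $r_n\ge d$ and $\|w_n^j-A\bar z_n\|\to 0$, and finishing with the Minty-type convex-combination argument and $(\bar A2)$; the paper instead shows $Ap\in Fix(T_r^{F_j})$ by an Opial-condition contradiction that leans on the resolvent comparison Lemma \ref{lem.H2012} and (B3). Your route is the classical Blum--Oettli/Combettes--Hirstoaga argument: slightly longer to write out, but self-contained (it does not need Lemma \ref{lem.H2012} or Opial for this step); just make explicit that $Ap\in Q$ (it is the weak limit of $w_{n_k}^j\in Q$ and $Q$ is weakly closed) before applying $(\bar A2)$ at $Ap$. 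One small slip to fix: firm nonexpansiveness rearranges exactly to $\langle A\bar z_n-Ax^*,\bar w_n-A\bar z_n\rangle\le-\|\bar w_n-A\bar z_n\|^2$ (this is (\ref{eq:3*}) in the paper), i.e. $2\langle A\bar z_n-Ax^*,\bar w_n-A\bar z_n\rangle\le-2\|\bar w_n-A\bar z_n\|^2$; the intermediate inequality you wrote, with only $-\|\bar w_n-A\bar z_n\|^2$ on the right, is true but too weak to yield the coefficient $\mu(2-\mu\|A\|^2)$ you (correctly) state next --- it would only give $\mu(1-\mu\|A\|^2)$, which is not positive on the whole admissible range of $\mu$.
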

\begin{proof} We divide the proof of Theorem \ref{theo1} into three claims.\\
\textbf{Claim 1.} There exists the limit of the sequence $\left\{||x_n-x^*||\right\}$ for all $x^*\in \Omega$.\\
\textit{The proof of Claim 1.} From Lemma \ref{lem1}.ii. and the hypothesis of $\lambda$, we have $||z_n^i-x^*||\le ||x_n-x^*||$ for all $x^*\in \Omega$. Thus,
\begin{equation}\label{eq:2*}
||\bar{z}_n-x^*||\le ||x_n-x^*||.
\end{equation}
Suppose $j_n\in \left\{1,\ldots,M\right\}$ such that $\bar{w}_n=w_n^{j_n}$. From Lemma \ref{lem.CH2005}(B2), we have
\begin{align}
||\bar{w}_n-Ax^*||^2&=||T_{r_n}^{F_{j_n}}(A\bar{z}_n)-T_{r_n}^{F_{j_n}}(Ax^*)||^2\notag\\ 
&\le \left\langle T_{r_n}^{F_{j_n}}(A\bar{z}_n)-T_{r_n}^{F_{j_n}}(Ax^*),A\bar{z}_n-Ax^*\right\rangle \notag\\
&= \left\langle \bar{w}_n-Ax^*,A\bar{z}_n-Ax^*\right\rangle \notag\\
&=\frac{1}{2}\left\{||\bar{w}_n-Ax^*||^2+||A\bar{z}_n-Ax^*||^2-||\bar{w}_n-A\bar{z}_n||^2\right\}.\notag
\end{align}
Thus,
\begin{equation*}
||\bar{w}_n-Ax^*||^2\le ||A\bar{z}_n-Ax^*||^2-||\bar{w}_n-A\bar{z}_n||^2
\end{equation*}
or
\begin{equation*}
||\bar{w}_n-Ax^*||^2- ||A\bar{z}_n-Ax^*||^2\le-||\bar{w}_n-A\bar{z}_n||^2.
\end{equation*}
This together with the following fact
\begin{align*}
\left\langle A(\bar{z}_n-x^*),\bar{w}_n-A\bar{z}_n\right\rangle
=\frac{1}{2}\left\{||\bar{w}_n-Ax^*||^2-||A\bar{z}_n-Ax^*||^2-||\bar{w}_n-A\bar{z}_n||^2\right\}
\end{align*}
implies that 
\begin{equation}\label{eq:3*}
\left\langle A(\bar{z}_n-x^*),\bar{w}_n-A\bar{z}_n\right\rangle\le -||\bar{w}_n-A\bar{z}_n||^2.
\end{equation}
From the definition of $x_{n+1}$ and the nonexpansiveness of the projection,
\begin{align}
||x_{n+1}-x^*||^2&=||P_C\left(\bar{z}_n+\mu A^*(\bar{w}_n-A\bar{z}_n)\right)-P_Cx^*||^2\notag\\ 
&\le||\bar{z}_n-x^*+\mu A^*(\bar{w}_n-A\bar{z}_n)|| ^2\notag\\ 
&=||\bar{z}_n-x^*|| ^2+\mu^2||A^*(\bar{w}_n-A\bar{z}_n)|| ^2+2\mu\left\langle \bar{z}_n-x^*,A^*(\bar{w}_n-A\bar{z}_n)\right\rangle\notag\\ 
&\le||\bar{z}_n-x^*|| ^2+\mu^2||A^*|| ^2||\bar{w}_n-A\bar{z}_n|| ^2+2\mu\left\langle A(\bar{z}_n-x^*),\bar{w}_n-A\bar{z}_n\right\rangle\notag\\ 
&\le||\bar{z}_n-x^*|| ^2+\mu^2||A^*|| ^2||\bar{w}_n-A\bar{z}_n|| ^2-2\mu||\bar{w}_n-A\bar{z}_n||^2\notag\\ 
&\le||\bar{z}_n-x^*|| ^2-\mu(2-\mu ||A^*|| ^2)||\bar{w}_n-A\bar{z}_n|| ^2\label{eq:4*}\\ 
&\le||\bar{z}_n-x^*|| ^2\label{eq:5*}
\end{align}
in which the last inequality is followed from the assumption of $\mu$. From the relations (\ref{eq:2*}) and (\ref{eq:5*}),
$$0\le ||x_{n+1}-x^*||\le ||\bar{z}_n-x^*||\le ||x_n-x^*||, ~\forall x^*\in \Omega. $$
Therefore, the sequence $\left\{||x_{n+1}-x^*||\right\}$ is decreasing and so there exist the limits
\begin{equation}\label{eq:6*}
\lim\limits_{n\to\infty}||x_n-x^*||=\lim\limits_{n\to\infty}||\bar{z}_n-x^*||=p(x^*),~\forall x^*\in \Omega.
\end{equation}
\textbf{Claim 2.} $\lim\limits_{n\to\infty}||z^i_n-x_n||=\lim\limits_{n\to\infty}||y^i_n-x_n||=\lim\limits_{n\to\infty}||w^j_n-A\bar{z}_n||=0$.\\
\textit{The proof of Claim 2.} Suppose that $i_n$ is the index in $\left\{1,\ldots,N\right\}$ such that $\bar{z}_n=z_n^{i_n}$. 
From Lemma \ref{lem1}.ii. with $i=i_n$,
$$||\bar{z}_n-x^*||^2\le ||x_n-x^*||^2-(1-2\lambda c_1)||y_n^{i_n}-x_n||^2-(1-2\lambda c_2)||y_n^{i_n}-\bar{z}_n||^2.  $$
Thus
$$(1-2\lambda c_1)||y_n^{i_n}-x_n||^2+(1-2\lambda c_2)||y_n^{i_n}-\bar{z}_n||^2\le ||x_n-x^*||^2-||\bar{z}_n-x^*||^2. $$
This together with (\ref{eq:6*}) and the hypothesis of $\lambda$ implies that
$$ \lim\limits_{n\to\infty}||y_n^{i_n}-x_n||=\lim\limits_{n\to\infty}||y_n^{i_n}-\bar{z}_n||=0. $$
Thus
\begin{equation}\label{eq:7*}
\lim\limits_{n\to\infty}||\bar{z}_n-x_n||=0
\end{equation}
because of $||\bar{z}_n-x_n||\le||y_n^{i_n}-x_n||+||y_n^{i_n}-\bar{z}_n||$. It follows from the last limit and the definition 
of $\bar{z}_n$ that
\begin{equation}\label{eq:8*}
\lim\limits_{n\to\infty}||z^i_n-x_n||=0,~\forall i=1,\ldots,N.
\end{equation}
From Lemma \ref{lem1}.ii. and the triangle inequality,
\begin{align*}
(1-2\lambda c_1)||y_n^{i}-x_n||^2&\le ||x_n-x^*||^2-||{z}^i_n-x^*||^2\\ 
&=\left(||x_n-x^*||-||{z}^i_n-x^*||\right)\left(||x_n-x^*||+|{z}^i_n-x^*||\right)\\
& \le ||x_n-z_n^i||\left(||x_n-x^*||+|{z}^i_n-x^*||\right)
\end{align*}
which implies that
\begin{equation}\label{eq:9*}
\lim\limits_{n\to\infty}||y_n^{i}-x_n||=0
\end{equation}
because of the relation (\ref{eq:8*}), the hypothesis of $\lambda$ and the boundedness of $\left\{x_n\right\},\left\{z_n^i\right\}$. Moreover, 
from (\ref{eq:4*}), we obtain
\begin{equation}\label{eq:10*}
\mu(2-\mu ||A^*|| ^2)||\bar{w}_n-A\bar{z}_n|| ^2\le ||\bar{z}_n-x^*|| ^2-||x_{n+1}-x^*||^2.
\end{equation}
Passing to the limit in the last inequality as $n\to\infty$ and using the relation (\ref{eq:6*}) and $\mu(2-\mu ||A^*|| ^2)>0$, one has
\begin{equation}\label{eq:11*}
\lim\limits_{n\to\infty}||\bar{w}_n-A\bar{z}_n||=0.
\end{equation}
From the definition of $\bar{w}_n$, we obtain
\begin{equation}\label{eq:12*}
\lim\limits_{n\to\infty}||{w}^j_n-A\bar{z}_n||=0,~\forall j=1,\ldots,M.
\end{equation}
\textbf{Claim 3.} $x_n,y_n^i,z_n^i\rightharpoonup p\in \cap_{i=1}^N EP(f_i,C)$ and $w_n^j\rightharpoonup Ap\in \cap_{j=1}^M EP(F_j,Q)$.\\
\textit{The proof of Claim 3.} Since $\left\{x_n\right\}$ is bounded, there exists a subsequence $\left\{x_m\right\}$ of $\left\{x_n\right\}$ 
which converges weakly to $p$. 
Since $C$ is convex, $C$ is weakly closed, and so $p\in C$. Thus, $y_m^i\rightharpoonup p$, $z_m^i\rightharpoonup p$ and 
$A\bar{z}_m\rightharpoonup Ap$, $w_m^j\rightharpoonup Ap$ 
because of the relations (\ref{eq:8*}), (\ref{eq:11*}) and (\ref{eq:12*}). It follows from Lemma \ref{lem1}.i. that 
$$ \lambda \left(f_i(x_m,y)-f_i(x_m,y_m^i)\right) \ge \left\langle y_m^i-x_m, y_m^i-y\right\rangle, \forall y\in C.$$
Passing to the limit in the last inequality as $m\to\infty$ and using the hypothesis $\rm (A3)$ and $\lambda>0$, we obtain $f_i(p,y)\ge 0,~\forall y\in C$. 
Thus, $p\in \cap_{i=1}^N EP(f_i,C)$. Now, we show that $Ap\in \cap_{j=1}^M EP(F_j,Q)$. By Lemma \ref{lem.CH2005}, $EP(F_j,Q)=Fix(T_r^{F_j})$ 
for some $r>0$. Assume that $Ap\notin Fix(T_r^{F_j})$, i.e., $Ap\ne T_r^{F_j}(Ap)$. By Opial's condition in $H$, the relation (\ref{eq:12*}) and Lemma 
\ref{lem.H2012}, we have
\begin{align*}
\lim\inf_{m\to\infty}||A\bar{z}_m-Ap||&<\lim\inf_{m\to\infty}||A\bar{z}_m-T_r^{F_j}(Ap)||\\ 
&\le \lim\inf_{m\to\infty}\left[||A\bar{z}_m-T_{r_m}^{F_j}(A\bar{z}_m)||+||T_{r_m}^{F_j}(A\bar{z}_m)-T_r^{F_j}(Ap)||\right]\\
&= \lim\inf_{m\to\infty}||T_{r_m}^{F_j}(A\bar{z}_m)-T_r^{F_j}(Ap)||\\
&= \lim\inf_{m\to\infty}||T_r^{F_j}(Ap)-T_{r_m}^{F_j}(A\bar{z}_m)||\\
&\le \lim\inf_{m\to\infty}\left[||Ap-A\bar{z}_m||+\frac{|r-r_m|}{r_m}||T_{r_m}^{F_j}(A\bar{z}_m)-A\bar{z}_m||\right]\\
&= \lim\inf_{m\to\infty}||Ap-A\bar{z}_m||.
\end{align*}
This is contrary. Thus, $Ap\in Fix(T_r^{F_j})=EP(F_j,Q)$, i.e., $Ap\in \cap_{j=1}^M EP(F_j,Q)$.

Finally, we show that the whole sequence $\left\{x_n\right\}$ converges weakly to $p$. Indeed, suppose that $\left\{x_n\right\}$ has a subsequence 
$\left\{x_k\right\}$ which converges weakly to $q\ne p$. By Opial's condition in $H$, we have
\begin{align*}
\lim\inf_{k\to\infty}||x_k-q||&<\lim\inf_{k\to\infty}||x_k-p||=\lim\inf_{m\to\infty}||x_m-p||\\ 
&<\lim\inf_{m\to\infty}||x_m-q||=\lim\inf_{k\to\infty}||x_k-q||. 
\end{align*}
This is a contradiction. Thus, the whole sequence $\left\{x_n\right\}$ converges weakly to $p$. By Claim 2, $y^i_n,z_n^i\rightharpoonup p$  and 
$w_n^j\rightharpoonup Ap$ as $n\to \infty$. Theorem \ref{theo1} is proved.
\end{proof}
\begin{corollary}\label{cor1}
Let $C,Q$ be two nonempty closed convex subsets of two real Hilbert spaces $H_1$ and $H_2$, respectively. Let 
$f:C\times C\to \Re$ be a bifunction satisfying Condition 1 and 
$F:Q\times Q\to \Re$ be a bifunction satisfying Condition 2. Let $A:H_1\to H_2$ be a 
bounded linear operator with the adjoint $A^*$. In addition the solution set 
$\Omega=\left\{x^*\in EP(f,C): Ap\in EP(F,Q) \right\}$
is nonempty. Let $\left\{x_n\right\}$, $\left\{y_n\right\}$, $\left\{z_n\right\}$ and $\left\{w_n\right\}$ 
be the sequences generated by the following manner: $x_0\in C,~C_0=C$ and 
$$
\begin{cases}
y_n=\arg\min\left\{\lambda f(x_n,y)+\frac{1}{2}||y-x_n||^2:y\in C\right\},\\
z_n=\arg\min\left\{\lambda f_i(y_n,y)+\frac{1}{2}||y-x_n||^2:y\in C\right\},\\
w_n=T_{r_n}^{F}(A{z}_n),\\
x_{n+1}=P_C\left({z}_n+\mu A^*({w}_n-A{z}_n)\right),
\end{cases}
$$
where $\lambda, r_n,\mu$ satisfy the conditions in Theorem \ref{theo1}. Then, the 
sequences $\left\{x_n\right\}$, $\left\{y_n\right\}$, $\left\{z_n\right\}$ converge weakly to some point 
$p\in EP(f,C)$ and $\left\{w_n\right\}$ converges weakly to $Ap\in EP(F,Q)$.
\end{corollary}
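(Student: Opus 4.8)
The plan is to recognize that this corollary is simply the special case $N=M=1$ of Theorem \ref{theo1}, so the proof is a reduction rather than a new argument. First I would check that Algorithm \ref{algor1}, when run with a single bifunction $f_1=f$ on $C$ and a single bifunction $F_1=F$ on $Q$, collapses exactly to the iteration written in the statement. Indeed, with $N=1$, Step 1 produces the two points
$$
y_n=\arg\min\left\{\lambda f(x_n,y)+\tfrac12\|y-x_n\|^2:y\in C\right\},\qquad
z_n=\arg\min\left\{\lambda f(y_n,y)+\tfrac12\|y-x_n\|^2:y\in C\right\},
$$
and Step 2 is vacuous since $z_n^1$ is the only candidate, whence $\bar z_n=z_n$. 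Likewise, with $M=1$, Step 3 gives $w_n=T_{r_n}^{F}(A z_n)$ and Step 4 is vacuous, so $\bar w_n=w_n$. Then Step 5 reads $x_{n+1}=P_C\bigl(z_n+\mu A^*(w_n-Az_n)\bigr)$, which is precisely the recursion in the corollary.

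Next I would verify that all hypotheses of Theorem \ref{theo1} are met: $\{f_i\}_{i=1}^{1}=\{f\}$ satisfies Condition 1, $\{F_j\}_{j=1}^{1}=\{F\}$ satisfies Condition 2, $A:H_1\to H_2$ is bounded linear with adjoint $A^*$, and the control parameters $\lambda,\mu,r_n$ obey $0<\lambda<\min\{\tfrac1{2c_1},\tfrac1{2c_2}\}$, $r_n\ge d>0$, $0<\mu<\tfrac2{\|A\|^2}$ by assumption. Finally, the solution set in Theorem \ref{theo1} is $\Omega=\{x^*\in\cap_{i=1}^{1}EP(f_i,C):Ax^*\in\cap_{j=1}^{1}EP(F_j,Q)\}=\{x^*\in EP(f,C):Ax^*\in EP(F,Q)\}$, which coincides with the set assumed nonempty here.

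Therefore Theorem \ref{theo1} applies verbatim and yields that $\{x_n\},\{y_n\},\{z_n\}$ converge weakly to some $p\in EP(f,C)$ while $\{w_n\}$ converges weakly to $Ap\in EP(F,Q)$, which is the assertion. There is no genuine obstacle in this argument; the only point needing a word is that the \emph{argmax} selection steps (Steps 2 and 4) become trivial when the families are singletons, which is immediate. One may also note in passing a harmless typo in the statement, where $f_i(y_n,y)$ should read $f(y_n,y)$.
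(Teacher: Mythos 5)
Your proposal is correct and matches the paper's own argument, which likewise deduces the corollary directly from Theorem \ref{theo1} by taking $f_i=f$ and $F_j=F$ (i.e., $N=M=1$); your version merely spells out the routine verification that the algorithm and hypotheses collapse as claimed.
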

\begin{proof}
Corollary \ref{cor1} is directly followed from Theorem \ref{theo1} with $f_i=f$ and $F_j=F$ for all $i,j$.
\end{proof}
In order to obtain an algorithm which provides the strong convergence, we propose the following parallel hybrid extragradient-proximal method 
that combines Algorithm \ref{algor1} with the hybrid (outer approximation) method.
\begin{algorithm}\label{algor2}(Parallel hybrid extragradient-proximal method for SEPs)\\
\textbf{Initialization.} Chose $x_0\in C,~C_0=C$, the control parameters $\lambda, r_n,\mu$ satisfy the following conditions
$$0<\lambda<\min\left\{\frac{1}{2c_1},\frac{1}{2c_2}\right\},~r_n\ge d>0, ~0<\mu<\frac{2}{||A||^2}.$$
\textbf{Step 1.} Solve $N$ strongly convex optimization programs in parallel
$$
\begin{cases}
y_n^i=\arg\min\left\{\lambda f_i(x_n,y)+\frac{1}{2}||y-x_n||^2:y\in C\right\},i=1,\ldots,N,\\
z_n^i=\arg\min\left\{\lambda f_i(y_n^i,y)+\frac{1}{2}||y-x_n||^2:y\in C\right\},i=1,\ldots,N.
\end{cases}
$$
\textbf{Step 2.} Find among $z_n^i$ the furthest element from $x_n$, i.e.,
$$ \bar{z}_n=\arg\max\left\{||z_n^i-x_n||:i=1,\ldots,N\right\}. $$
\textbf{Step 3.} Solve $M$ strongly monotone regularized equilibrium programs in parallel
$$ w_n^j=T_{r_n}^{F_j}(A\bar{z}_n),j=1,\ldots,M. $$
\textbf{Step 4.} Find among $w_n^j$ the furthest element from $A\bar{z}_n$, i.e.,
$$ \bar{w}_n=\arg\max\left\{||w_n^j-A\bar{z}_n||:j=1,\ldots,M\right\}. $$
\textbf{Step 5.} Compute $t_n=P_C\left(\bar{z}_n+\mu A^*(\bar{w}_n-A\bar{z}_n)\right)$.\\
\textbf{Step 6.} Compute $x_{n+1}=P_{C_{n+1}}(x_0)$, where $C_{n+1}=\left\{v\in C_n:||t_n-v||\le ||\bar{z}_n-v||\le ||x_n-v||\right\}$. 
Set $n=n+1$ and go back \textbf{Step 1}.
\end{algorithm}
We have the following result.
\begin{theorem}[Strong convergence theorem]\label{theo2}
Let $C,Q$ be two nonempty closed convex subsets of two real Hilbert spaces $H_1$ and $H_2$, respectively. Let 
$\left\{f_i\right\}_{i=1}^N:C\times C\to \Re$ be a finite family of bifunctions satisfying Condition 1 and 
$\left\{F_j\right\}_{j=1}^M:Q\times Q\to \Re$ be a finite family 
of bifunctions satisfying Condition 2. Let $A:H_1\to H_2$ be a 
bounded linear operator with the adjoint $A^*$. In addition the solution set $\Omega$ is nonempty. Then, the 
sequences $\left\{x_n\right\}$, $\left\{y^i_n\right\}$, $\left\{z^i_n\right\}$ generated by Algorithm \ref{algor2} converge 
strongly to $x^\dagger=P_\Omega(x_0)$ and $\left\{w^j_n\right\}$ converges strongly to $Ax^\dagger\in \cap_{j=1}^M EP(F_j,Q)$.
\end{theorem}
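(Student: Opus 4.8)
The plan is to follow the standard analysis of hybrid (outer‑approximation / CQ) schemes, recycling wholesale the estimates already established in the proof of Theorem~\ref{theo1}. First I would verify by induction that every $C_n$ is closed and convex, so that the projections $x_{n+1}=P_{C_{n+1}}(x_0)$ make sense: after squaring, each of the defining conditions $||t_n-v||\le||\bar z_n-v||$ and $||\bar z_n-v||\le||x_n-v||$ is a linear inequality in $v$, hence $C_{n+1}$ is the intersection of the closed convex set $C_n$ with two closed half‑spaces. Next I would prove $\Omega\subseteq C_n$ for all $n$ by induction. The base case $\Omega\subseteq C_0=C$ is trivial; for the inductive step one fixes $x^*\in\Omega$ and reruns Claim~1 of Theorem~\ref{theo1}, namely inequality (\ref{eq:2*}) for $||\bar z_n-x^*||\le||x_n-x^*||$ and the chain (\ref{eq:3*})--(\ref{eq:5*}) with $t_n$ in place of $x_{n+1}$ for $||t_n-x^*||\le||\bar z_n-x^*||$, so that $x^*\in C_{n+1}$. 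Since $\Omega$ is nonempty and (as is standard under Condition~1 and Lemma~\ref{lem.CH2005}(B4), using continuity of $A$) closed and convex, $x^\dagger=P_\Omega(x_0)$ is well defined, and $x^\dagger\in C_n$ for all $n$.

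Second, I would prove that $\{x_n\}$ converges strongly. From $x_n=P_{C_n}(x_0)$ and $x_{n+1}\in C_{n+1}\subseteq C_n$, Lemma~\ref{lem.PC} gives $||x_n-x_0||\le||x_{n+1}-x_0||\le||x^\dagger-x_0||$, so $\{||x_n-x_0||\}$ is nondecreasing and bounded, hence convergent. For $m\le n$, using $x_n\in C_n\subseteq C_m$, $x_m=P_{C_m}(x_0)$ and Lemma~\ref{lem.PC}.i, one gets $||x_n-x_m||^2\le||x_n-x_0||^2-||x_m-x_0||^2\to 0$, so $\{x_n\}$ is Cauchy and $x_n\to p$ strongly for some $p\in C$; in particular $||x_{n+1}-x_n||\to 0$.

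Third, I would recover the asymptotic behaviour of the auxiliary sequences and identify $p$. Since $x_{n+1}\in C_{n+1}$, $||t_n-x_{n+1}||\le||\bar z_n-x_{n+1}||\le||x_n-x_{n+1}||\to 0$, whence $||\bar z_n-x_n||\to 0$ and $||t_n-x_n||\to 0$. Then, exactly as in Claim~2 of Theorem~\ref{theo1} — using Lemma~\ref{lem1}.ii with $i=i_n$ (where $\bar z_n=z_n^{i_n}$), the inequality (\ref{eq:4*}) with $t_n$ in place of $x_{n+1}$, the boundedness of all the sequences, and the hypotheses on $\lambda$ and $\mu$, together with $\big|\,||x_n-x^*||-||\bar z_n-x^*||\,\big|\le||x_n-\bar z_n||\to 0$ — one obtains $||y_n^i-x_n||\to 0$ and $||z_n^i-x_n||\to 0$ for every $i$, and $||w_n^j-A\bar z_n||\to 0$ for every $j$. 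Consequently $y_n^i,z_n^i,\bar z_n\to p$ and $A\bar z_n,w_n^j\to Ap$ strongly. Passing to the limit in Lemma~\ref{lem1}.i and using (A3) yields $f_i(p,y)\ge 0$ for all $y\in C$, so $p\in\cap_{i=1}^N EP(f_i,C)$; and from $w_n^j=T_{r_n}^{F_j}(A\bar z_n)$, Lemma~\ref{lem.H2012} gives $||T_r^{F_j}(Ap)-w_n^j||\le||Ap-A\bar z_n||+\frac{|r-r_n|}{r_n}||w_n^j-A\bar z_n||\to 0$ (the coefficient $|r-r_n|/r_n=|1-r/r_n|$ being bounded because $r_n\ge d>0$), so $w_n^j\to T_r^{F_j}(Ap)$ and therefore $Ap=T_r^{F_j}(Ap)\in Fix(T_r^{F_j})=EP(F_j,Q)$ by Lemma~\ref{lem.CH2005}(B3); hence $p\in\Omega$. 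Finally, $||p-x_0||=\lim_n||x_n-x_0||\le||x^\dagger-x_0||$ together with $p\in\Omega$ and the uniqueness of the metric projection force $p=x^\dagger$, which finishes the proof, since then $x_n,y_n^i,z_n^i\to x^\dagger$ and $w_n^j\to Ax^\dagger$.

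The only genuinely delicate point is the inductive step $\Omega\subseteq C_{n+1}$: one must re‑derive the full inequality chain of Claim~1 of Theorem~\ref{theo1} but with $t_n$ replacing the iterate, because in Algorithm~\ref{algor2} the new point is defined as the projection $P_{C_{n+1}}(x_0)$ rather than by the closed formula that $t_n$ satisfies. Everything else is routine CQ‑method bookkeeping — monotonicity of $\{||x_n-x_0||\}$, the Cauchy estimate via Lemma~\ref{lem.PC}.i, and the characterization of $x^\dagger$ as a metric projection.
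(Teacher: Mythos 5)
Your proposal is correct and follows essentially the same route as the paper's proof: the same induction showing $C_n$ is closed, convex and contains $\Omega$ (re-running the Claim~1 estimates of Theorem~\ref{theo1} with $t_n$ in place of $x_{n+1}$), the same Cauchy argument via $x_n=P_{C_n}(x_0)$ and Lemma~\ref{lem.PC}, the same limit identifications through Lemma~\ref{lem1} and (A3), and the same use of Lemma~\ref{lem.H2012} with $r_n\ge d>0$ to get $Ap\in Fix(T_r^{F_j})$, concluding $p=x^\dagger$ from $||x_n-x_0||\le||x^\dagger-x_0||$. No gaps beyond those already present in the paper's own argument.
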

\begin{proof} We also divide the proof of Theorem \ref{theo2} into several claims.\\
\textbf{Claim 1.} $C_n$ is closed convex set and $\Omega\subset C_n$ for all $n\ge 0$.\\
\textit{The proof of Claim 1.}
Set 
$$ 
\begin{cases}
C_n^1=\left\{v\in H_1:||t_n-v||\le ||\bar{z}_n-v||\right\},\\
C_n^2=\left\{v\in H_1:||\bar{z}_n-v||\le ||x_n-v||\right\}.
\end{cases}
$$
Then
\begin{equation}\label{eq:1}
C_{n+1}=C_n\cap C_n^1\cap C_n^2.
\end{equation}
Note that $C_n^1,C_n^2$ are either the halfspaces or the whole space $H_1$ for all $n\ge 0$. Hence, they are closed and convex. Obviously, 
$C_0=C$ is closed and convex. Suppose that $C_n$ is closed and convex for some $n\ge 0$. Then, from (\ref{eq:1}), $C_{n+1}$ is also 
closed and convex. By the induction, $C_{n}$ is closed and convex for all $n\ge 0$. Next, we show that $\Omega\subset C_n$ for all $n\ge 0$. 
From Lemma \ref{lem1}.ii. and the hypothesis of $\lambda$, we have $||z_n^i-x^*||\le ||x_n-x^*||$ for all $x^*\in \Omega$. Thus,
\begin{equation}\label{eq:2}
||\bar{z}_n-x^*||\le ||x_n-x^*||.
\end{equation}
By arguing similarly to Claim 1 in the proof of Theorem \ref{theo1} we obtain
\begin{align}
||t_n-x^*||^2
&\le||\bar{z}_n-x^*|| ^2-\mu(2-\mu ||A^*|| ^2)||\bar{w}_n-A\bar{z}_n|| ^2\label{eq:4}\\ 
&\le||\bar{z}_n-x^*|| ^2\label{eq:5}. 
\end{align}
From (\ref{eq:2}) and (\ref{eq:5}),
$$ ||t_n-x^*||\le ||\bar{z}_n-x^*||\le ||x_n-x^*||, ~\forall x^*\in \Omega. $$
Thus, by the definition of $C_n$ and the induction, $\Omega\subset C_n$ for all $n\ge 0$.\\
\textbf{Claim 2.} $\left\{x_n\right\}$ is a Cauchy sequence and
$$\lim\limits_{n\to\infty}x_n=\lim\limits_{n\to\infty}y^i_n=\lim\limits_{n\to\infty}z^i_n=p,~\lim\limits_{n\to\infty}w_n^j=\lim\limits_{n\to\infty}A\bar{z}_n=Ap.$$
\textit{The proof of Claim 2.} 
From $x_n=P_{C_n}(x_0)$ and Lemma \ref{lem.PC}.i.,
\begin{equation}\label{eq:6}
||x_n-x_0||\le||u-x_0||,~\forall u\in C_n.
\end{equation}
Therefore, $||x_n-x_0||\le||x_{n+1}-x_0||$ because $x_{n+1}\in C_{n+1}\subset C_n$. This implies that the sequence $\left\{||x_n-x_0||\right\}$ 
is non-decreasing. The inequality (\ref{eq:6}) with $u=x^\dagger:=P_\Omega(x_0)\in \Omega\subset C_n$ leads to 
\begin{equation}\label{eq:7}
||x_n-x_0||\le||x^\dagger-x_0||.
\end{equation}
Thus, the sequence $\left\{||x_n-x_0||\right\}$ is bounded, and so there exists the limit of $\left\{||x_n-x_0||\right\}$. For all $m\ge n$, from the 
definition of $C_m$, we have $x_m\in C_m\subset C_n$. So, from $x_n=P_{C_n}(x_0)$ and Lemma \ref{lem.PC}.i.,
\begin{equation}\label{eq:8}
||x_n-x_m||^2\le||x_m-x_0||^2-||x_n-x_0||^2.
\end{equation}
Passing to the limit in the last inequality as $m,n\to\infty$, we get
\begin{equation}\label{eq:10}
\lim\limits_{m,n\to\infty}||x_n-x_m||=0.
\end{equation}
Thus, $\left\{x_n\right\}$ is a Cauchy sequence and 
\begin{equation}\label{eq:11}
\lim\limits_{n\to\infty}||x_n-x_{n+1}||=0.
\end{equation}
From the definition of $C_{n+1}$ and $x_{n+1}\in C_{n+1}$, we have
\begin{equation*}
||t_n-x_{n+1}||\le ||\bar{z}_n-x_{n+1}||\le ||x_n-x_{n+1}||.
\end{equation*}
Thus, from the triangle inequality, one has
\begin{align*}
&||t_n-x_{n}||\le||t_n-x_{n+1}||+||x_{n+1}-x_n||\le2||x_n-x_{n+1}||,\\ 
&||\bar{z}_n-x_{n}||\le||\bar{z}_n-x_{n+1}||+||x_{n+1}-x_n||\le2||x_n-x_{n+1}||,\\
&||\bar{z}_n-t_{n}||\le||\bar{z}_n-x_{n}||+||x_{n}-t_n||\le4||x_n-x_{n+1}||
\end{align*}
Three last inequalities together with the relation (\ref{eq:11}) imply that
\begin{equation}\label{eq:12}
\lim\limits_{n\to\infty}||t_n-x_n||=\lim\limits_{n\to\infty}||\bar{z}_n-t_{n}||=\lim\limits_{n\to\infty}||\bar{z}_n-x_n||=0.
\end{equation}
Hence, from the definition of $\bar{z}_n$, we also obtain
\begin{equation}\label{eq:13}
\lim\limits_{n\to\infty}||z_n^i-x_n||=0,~\forall i=1,\ldots,N.
\end{equation}
Since $\left\{x_n\right\}$ is a Cauchy sequence, $x_n\to p$ and
\begin{equation}\label{eq:14}
\lim\limits_{n\to\infty}t_n=\lim\limits_{n\to\infty}\bar{z}_n=\lim\limits_{n\to\infty}z_n^i=p,~\forall i=1,\ldots,N,
\end{equation}
and so
\begin{equation}\label{eq:14}
\lim\limits_{n\to\infty}A\bar{z}_n=Ap.
\end{equation}
From the relation (\ref{eq:4}) and the triangle inequality, we obtain
\begin{align*}
\mu(2-\mu ||A^*|| ^2)||\bar{w}_n-A\bar{z}_n|| ^2&\le||\bar{z}_n-x^*||^2-||t_n-x^*||^2\\ 
& =(||\bar{z}_n-x^*||-||t_n-x^*||)(||\bar{z}_n-x^*||+||t_n-x^*||)\\
&\le ||\bar{z}_n-t_n||(||\bar{z}_n-x^*||+||t_n-x^*||)
\end{align*}
Thus, from $\mu(2-\mu ||A^*|| ^2)>0$, the boundedness of $\left\{t_n\right\},\left\{\bar{z}_n\right\}$ and (\ref{eq:12}) we obtain
\begin{equation*}
\lim\limits_{n\to\infty}||\bar{w}_n-A\bar{z}_n||=0.
\end{equation*}
From the definition of $\bar{w}_n$, we get
\begin{equation}\label{eq:15}
\lim\limits_{n\to\infty}||w^j_n-A\bar{z}_n||=0,~\forall j=1,\ldots,M,
\end{equation}
which follows from (\ref{eq:14}) that
\begin{equation}\label{eq:16}
\lim\limits_{n\to\infty}w^j_n=Ap,~\forall j=1,\ldots,M.
\end{equation}
From Lemma \ref{lem1}.ii. and the triangle inequality, we have
\begin{align*}
(1-2\lambda c_1)||y_n^i-x_n||^2&\le||x_n-x^*||^2-||z_n^i-x^*||^2\\ 
& =(||x_n-x^*||-||z^i_n-x^*||)(||x_n-x^*||+||z^i_n-x^*||)\\
&\le ||x_n-z^i_n||(||x_n-x^*||+||z^i_n-x^*||)
\end{align*}
Thus, from the hypothesis of $\lambda$, the boundedness of $\left\{x_n\right\},\left\{z^i_n\right\}$ and (\ref{eq:13}) we obtain
\begin{equation}\label{eq:16*}
\lim\limits_{n\to\infty}||y_n^i-x_n||=0.
\end{equation}
Therefore, $y_n^i\to p$ as $n\to\infty$.\\
\textbf{Claim 3.} $p\in \Omega$ and $p=x^\dagger:=P_\Omega(x_0)$.\\
\textit{The proof of Claim 3.} 
By Lemma \ref{lem1}.ii, we get
$$\lambda \left(f_i(x_n,y)-f_i(x_n,y_n^i)\right) \ge \left\langle y_n^i-x_n, y_n^i-y\right\rangle, \forall y\in C.$$
Passing to the limit in the last inequality as $n\to \infty$ and using the assumption $\rm (A3)$ and the relation (\ref{eq:13}), we get 
$f_i(p,y)\ge 0$ for all $y\in C$. Thus, $p\in \cap_{i=1}^N EP(f_i,C)$. 

Moreover, from Lemma \ref{lem.H2012}, for some $r>0$ we have
\begin{align}
||T_r^{F_j}(Ap)-Ap||&\le||T_r^{F_j}(Ap)-T_{r_n}^{F_j}(A\bar{z}_n)||+||T_{r_n}^{F_j}(A\bar{z}_n)-A\bar{z}_n||+||A\bar{z}_n-Ap||\notag\\ 
& \le||Ap-A\bar{z}_n||+\frac{r_n-r}{r_n}||T_{r_n}^{F_j}(A\bar{z}_n)-A\bar{z}_n||+||T_{r_n}^{F_j}(A\bar{z}_n)-A\bar{z}_n||+||A\bar{z}_n-Ap||\notag\\ 
&=2||Ap-A\bar{z}_n||+\frac{r_n-r}{r_n}||w_n^j-A\bar{z}_n||+||w_n^j-A\bar{z}_n||\to 0\notag
\end{align}
which is followed from the relations (\ref{eq:14}),(\ref{eq:15}),(\ref{eq:16}) and $r_n\ge d>0$. Thus, $T_r^{F_j}(Ap)-Ap=0$ or $Ap$ is 
a fixed point of $T_r^{F_j}$. From Lemma \ref{lem.CH2005}, we obtain $Ap\in \cap_{j=1}^M EP({F_j},Q)$. Thus, $p\in \Omega$. Finally, from (\ref{eq:7}), 
$||x_n-x_0||\le||x^\dagger-x_0||$ where $x^\dagger=P_\Omega(x_0)$. Taking $n\to \infty$ in this inequality, one has $||p-x_0||\le||x^\dagger-x_0||$. 
From the definition of $x^\dagger$, $p=x^\dagger$. Theorem \ref{theo2} is proved.
\end{proof}
\begin{corollary}\label{cor2}
Let $C,Q$ be two nonempty closed convex subsets of two real Hilbert spaces $H_1$ and $H_2$, respectively. Let 
$f:C\times C\to \Re$ be a bifunction satisfying Condition 1 and 
$F:Q\times Q\to \Re$ be a bifunction satisfying Condition 2. Let $A:H_1\to H_2$ be a 
bounded linear operator with the adjoint $A^*$. In addition the solution set 
$\Omega=\left\{x^*\in EP(f,C): Ap\in EP(F,Q) \right\}$
is nonempty. Let $\left\{x_n\right\}$, $\left\{y_n\right\}$, $\left\{z_n\right\}$, $\left\{t_n\right\}$ and $\left\{w_n\right\}$ 
be the sequences generated by the following manner: $x_0\in C,~C_0=C$ and 
$$
\begin{cases}
y_n=\arg\min\left\{\lambda f(x_n,y)+\frac{1}{2}||y-x_n||^2:y\in C\right\},\\
z_n=\arg\min\left\{\lambda f(y_n,y)+\frac{1}{2}||y-x_n||^2:y\in C\right\},\\
w_n=T_{r_n}^{F}(A{z}_n),\\
t_{n}=P_C\left({z}_n+\mu A^*({w}_n-A{z}_n)\right),\\
C_{n+1}=\left\{v\in C_n:||t_n-v||\le ||\bar{z}_n-v||\le ||x_n-v||\right\},\\ 
x_{n+1}=P_{C_{n+1}}(x_0),
\end{cases}
$$
where $\lambda, r_n,\mu$ satisfy the conditions in Theorem \ref{theo2}. Then, the 
sequences $\left\{x_n\right\}$, $\left\{y_n\right\}$, $\left\{z_n\right\}$, $\left\{t_n\right\}$ converge strongly to 
$x^\dagger=P_\Omega(x_0)$ and $\left\{w_n\right\}$ converges strongly to $Ap\in EP(F,Q)$.
\end{corollary}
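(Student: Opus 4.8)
The plan is to obtain Corollary \ref{cor2} as the one-bifunction specialization of Theorem \ref{theo2}, rather than re-running the three-claim argument from scratch. Concretely, I would set $N=M=1$ in Algorithm \ref{algor2} and put $f_1:=f$, $F_1:=F$. Under this choice Step 1 produces a single pair $(y_n^1,z_n^1)=(y_n,z_n)$; Step 2 forces $\bar z_n=z_n^1=z_n$, because the $\arg\max$ over a one-element index set is that element; Step 3 produces the single resolvent value $w_n^1=T_{r_n}^{F}(Az_n)=w_n$; and Step 4 likewise gives $\bar w_n=w_n^1=w_n$. Hence Steps 5--6 of Algorithm \ref{algor2} collapse to
$$
t_n=P_C\left(z_n+\mu A^*(w_n-Az_n)\right),\qquad x_{n+1}=P_{C_{n+1}}(x_0),
$$
with $C_{n+1}=\left\{v\in C_n:||t_n-v||\le||z_n-v||\le||x_n-v||\right\}$, which is precisely the recursion written in the statement (the symbol $\bar z_n$ appearing there is, after the specialization, just $z_n$, since $N=1$).

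Next I would check that the hypotheses match verbatim: "$f$ satisfies Condition 1'' is exactly "$\{f_i\}_{i=1}^1$ satisfies Condition 1'', and "$F$ satisfies Condition 2'' is exactly "$\{F_j\}_{j=1}^1$ satisfies Condition 2''; the constants $c_1,c_2$ for $f$ serve as the common Lipschitz-type constants, and the parameter restrictions $0<\lambda<\min\left\{\frac{1}{2c_1},\frac{1}{2c_2}\right\}$, $r_n\ge d>0$, $0<\mu<\frac{2}{||A||^2}$ are assumed. The solution set for the singleton families is
$$
\Omega=\left\{x^*\in \cap_{i=1}^1 EP(f_i,C):Ax^*\in \cap_{j=1}^1 EP(F_j,Q)\right\}=\left\{x^*\in EP(f,C):Ax^*\in EP(F,Q)\right\},
$$
which coincides with the set named in the corollary (the "$Ap$'' printed there should read $Ax^*$) and is assumed nonempty. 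Thus Theorem \ref{theo2} applies directly and gives that $\left\{x_n\right\}$, $\left\{y_n\right\}$, $\left\{z_n\right\}$ converge strongly to $x^\dagger=P_\Omega(x_0)$ and $\left\{w_n\right\}$ converges strongly to $Ax^\dagger\in EP(F,Q)$; the claim for $\left\{t_n\right\}$ follows from the same proof, where $\lim_{n\to\infty}||t_n-x_n||=0$ (relation (\ref{eq:12}) in the proof of Theorem \ref{theo2}) yields $t_n\to x^\dagger$ as well.

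I expect no genuine obstacle: the only points to record are the two bookkeeping observations above — that the $\arg\max$ selections in Steps 2 and 4 degenerate to the unique available element so that Algorithm \ref{algor2} reduces to the displayed four-line iteration, and that Condition 1 (respectively Condition 2) for a family of one bifunction is literally Condition 1 (respectively Condition 2) for that bifunction. Once these are noted, the conclusion is immediate from Theorem \ref{theo2}, exactly parallel to the way Corollary \ref{cor1} was deduced from Theorem \ref{theo1}.
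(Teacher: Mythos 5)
Your proposal is correct and follows exactly the paper's route: the paper also deduces Corollary \ref{cor2} by applying Theorem \ref{theo2} with $f_i=f$ and $F_j=F$ for all $i,j$. Your extra bookkeeping (the degenerate $\arg\max$ selections, the identification $\bar z_n=z_n$, and noting that $t_n\to x^\dagger$ via $\|t_n-x_n\|\to 0$) only makes explicit what the paper leaves implicit.
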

\begin{proof}
Corollary \ref{cor2} is directly followed from Theorem \ref{theo2} with $f_i=f$ and $F_j=F$ for all $i,j$.
\end{proof}
\section*{Conclusions}
We have proposed two parallel extragradient-proximal algorithms for split equilibrium problems and proved their convergence. 
we have designed the algorithms by combining the extragradient method for a class of pseudomonotone and Lipschitz-type continuous 
bifunctions, the proximal method for monotone bifunctions and the hybrid (outer approximation) method. 

\end{document}